\theoremstyle{plain}
\newtheorem{theorem}                 {Theorem}      [section]
\newtheorem{proposition}  [theorem]  {Proposition}
\newtheorem{corollary}    [theorem]  {Corollary}
\newtheorem{lemma}        [theorem]  {Lemma}
\theoremstyle{definition}
\newtheorem{example}      [theorem]  {Example}
\newtheorem{remark}       [theorem]  {Remark}
\newtheorem{definition}   [theorem]  {Definition}
\numberwithin{equation}{section}
\def \rn{{\mathbb R}}
\def \s{{\mathbb S}}
\def \cn{{\mathbb C}}
\def \H{{\mathbb H}}
\def \nab#1#2{\hbox{$\nabla$\kern -.3em\lower 1.0 ex
    \hbox{$#1$}\kern -.1 em {$#2$}}}
\def \SLR#1{\text{\bf SL}_{#1}(\rn)}
\def \SL2{\widetilde{\text{\bf SL}}_{2}(\rn)}
\def \SO#1{\text{\bf SO}(#1)}
\def \SU#1{\text{\bf SU}(#1)}
\def \Sp#1{\text{\bf Sp}(#1)}
\def \Sol{\text{\bf Sol}}
\def \Nil{\text{\bf Nil}}
\def \sol{\mathfrak{sol}}
\def \nil{\mathfrak{nil}}
\DeclareMathOperator{\Div}{div}
\numberwithin{equation}{section}
\begin{document}

\title[Biharmonic functions on the Thurston geometries]
{A Note on biharmonic functions on \\  the Thurston geometries}

\dedicatory{version 1.0012 - 12 December 2017}

\author{Sigmundur Gudmundsson}

\address{Mathematics, Faculty of Science\\ University of Lund\\
Box 118, Lund 221\\
Sweden}
\email{Sigmundur.Gudmundsson@math.lu.se}

\begin{abstract}
We construct new explicit proper biharmonic functions on the $3$-dimensional
Thurston geometries $\Sol$, $\Nil$, $\SL2$, $\H^2\times\rn$ and $\s^2\times\rn$.
\end{abstract}

\subjclass[2010]{53A07, 53C42, 58E20}

\keywords{Biharmonic functions, Thurston geometries}

\maketitle

\section{Introduction}

The biharmonic equation is a fourth order partial differential equation which arises in areas of continuum mechanics, including elasticity theory and the solution of Stokes flows. The literature on biharmonic functions is vast, but usually the domains are either surfaces or open subsets of flat Euclidean space $\rn^n$.

Recently, new explicit biharmonic functions were constructed on the classical compact simple Lie groups $\SU n$, $\SO n$ and $\Sp n$.  This gives examples on the 3-dimensional round sphere $\s^3\cong\SU 2$ and the standard hyperbolic space $\H^3$ via a general duality principle.  For this see the papers \cite{Gud-Mon-Rat-1} and \cite{Gud-13}.

The classical Riemannian manifolds $\rn^3$, $\s^3$ and $\H^3$ of constant curvature are all on Thurston's celebrated list of $3$-dimensional model geometries, see \cite{BW-book}, \cite{Sco} and \cite{Thu}.  The aim of this paper is to extend the investigation of biharmonic functions to the other members on Thurston's list i.e. $\Sol$, $\Nil$, $\SL2$, $\H^2\times\rn$ and $\s^2\times\rn$.  In all these cases we construct new explicit solutions to the corresponding fourth order biharmonic equation.

Our methods can also be used to manufacture proper $r$-harmonic solutions for $r>2$, see Definition \ref{definition-proper-r-harmonic}.  In this study we have chosen to mainly focus on the case when $r=2$ because of its physical relevance. The results are formulated such that the solutions are globally defined but clearly the same constructions hold even locally.  This is particularly important for the holomorphic functions in use.

\section{Proper $r$-harmonic functions}\label{section-r-harmonic}

Let $(M,g)$ be a smooth m-dimensional manifold equipped with a Riemannian metric $g$.  We complexify the tangent bundle $TM$ of $M$ to $T^{\cn}M$ and extend the metric $g$ to a complex-bilinear form on $T^{\cn}M$.  Then the gradient $\nabla f$ of a complex-valued function $f:(M,g)\to\cn$ is a section of $T^{\cn}M$.  In this situation, the well-known {\it linear} Laplace-Beltrami operator (alt. tension field) $\tau$ on $(M,g)$ acts locally on $f$ as follows
$$
\tau(f)=\Div (\nabla f)=\sum_{i,j=1}^m\frac{1}{\sqrt{|g|}} \frac{\partial}{\partial x_j}
\left(g^{ij}\, \sqrt{|g|}\, \frac{\partial f}{\partial x_i}\right).
$$
For two complex-valued functions $f,h:(M,g)\to\cn$ we have the following well-known relation
$$\tau(f\cdot h)=\tau(f)\cdot h+2\cdot\kappa(f,h)+f\cdot\tau(h),$$
where the {\it conformality} operator $\kappa$ is given by $\kappa(f,h)=g(\nabla f,\nabla h)$.  Locally this acts by
$$\kappa(f,h)=\sum_{i,j=1}^mg^{ij}\frac{\partial f}{\partial x_i}\frac{\partial h}{\partial x_j}.$$

\begin{definition}\label{definition-proper-r-harmonic}
For a positive integer $r$, the iterated Laplace-Beltrami operator $\tau^r$ is given by
$$\tau^{0} (f)=f\ \ \text{and}\ \ \tau^r (f)=\tau(\tau^{(r-1)}(f)).$$
We say that a complex-valued function $f:(M,g)\to\cn$ is
\begin{enumerate}
\item[(a)] {\it $r$-harmonic} if $\tau^r (f)=0$, and
\item[(b)] {\it proper $r$-harmonic} if $\tau^r (f)=0$ and $\tau^{(r-1)} (f)$ does not vanish identically.
\end{enumerate}
\end{definition}

It should be noted that the {\it harmonic} functions are exactly $r$-harmonic for $r=1$
and the {\it biharmonic} functions are the $2$-harmonic ones.
In some texts, the $r$-harmonic functions are also called {\it polyharmonic} of order $r$.

\section{The model geometry $\Sol$}
The model space $\Sol$ on Thurston's list can be seen as the $3$-dimensional solvable Lie subgroup
$$
\Sol=\{
\begin{bmatrix}
e^t & 0 & x\\
0 & e^{-t} & y\\
0 & 0 & 1
\end{bmatrix}|\ x,y,t\in\rn\}
$$
of $\SLR 3$.  The metric on $\Sol$ is determined by the orthonormal basis
$\{X,Y,T\}$ of its Lie algebra $\sol$ given by
$$
X=
\begin{bmatrix}
0 & 0 & 1\\
0 & 0 & 0\\
0 & 0 & 0
\end{bmatrix},\ \
Y=
\begin{bmatrix}
0 & 0 & 0\\
0 & 0 & 1\\
0 & 0 & 0
\end{bmatrix},\ \
T=
\begin{bmatrix}
1 & 0 & 0\\
0 & -1 & 0\\
0 & 0 & 0
\end{bmatrix}.
$$
In the global coordinates $(x,y,t)$ on $\Sol$ this takes the following well-known form
$$
ds^2=e^{2t}dx^2+e^{-2t}dy^2+dt^2.
$$
It is easily seen that the corresponding Laplace-Beltrami operator $\tau$ and the
conformality operator $\kappa$ satisfy
\begin{equation*}\label{tension-Sol}
\tau(f)=e^{-2t}\frac {\partial ^2 f}{\partial x^2}
+e^{2t}\frac {\partial ^2 f}{\partial y^2}
+\frac {\partial ^2 f}{\partial t^2},
\end{equation*}
and
$$
\kappa(f,h)=e^{-2t}\frac {\partial f}{\partial x}\frac {\partial h}{\partial x}
+e^{2t}\frac {\partial f}{\partial y}\frac {\partial h}{\partial y}
+\frac {\partial f}{\partial t}\frac {\partial h}{\partial t},
$$
respectively.

We now present two new families of globally defined complex-valued proper
biharmonic functions on the model geometry $\Sol$.

\begin{example}\label{exam:Sol-1}
For non-zero elements $a,b\in\cn^{4}$ let the complex-valued function
$f_1,f_2:\Sol\to\cn$ be defined by
$$f_1(x,y,t)=(a_1+a_2x+a_3y+a_4xy)$$
and
$$f_2(x,y,t)=(b_1+b_2x+b_3y+b_4xy).$$
Then a simple calculation shows that the tension field satisfies $\tau(f_1)=\tau(f_2)=0$,
so the functions $f_1$ and $f_2$ are harmonic.  It is also clear that that for any natural number $r$
and the conformality operator $\kappa$ we have $$\kappa(t^r,f_1)=\kappa(t^r,f_2)=0.$$
Next we define a sequence $\{F_r\}_{r=0}^\infty$ of function $F_r:\Sol\to\cn$ by
$$F_r(x,y,t)=t^{2r}\cdot f_1(x,y,t)+t^{2r+1}\cdot f_2(x,y,t).$$
Then the tension field $\tau (F_r)$ satisfies
\begin{eqnarray*}
\tau (F_r)&=&\tau(t^{2r})\cdot f_1+2\cdot\kappa(t^{2r},f_1)+t^{2r}\cdot \tau(f_1)\\
& &\qquad +\tau(t^{2r+1})\cdot f_2+2\cdot\kappa(t^{2r+1},f_2)+t^{2r+1}\cdot \tau(f_2)\\
&=&\tau(t^{2r})\cdot f_1+\tau(t^{2r+1})\cdot f_2\\
&=&2r(2r-1)\cdot t^{2r-2}\cdot f_1+2r(2r+1)\cdot t^{2r-1}\cdot f_2.
\end{eqnarray*}
Applying these calculations and the linearity of the tension field it is easy to see that
for each natural number $r$ the function $F_r:\Sol\to\cn$ is proper $r$-harmonic.
\end{example}

\begin{example}\label{exam:Sol-2}
For $c_{20},c_{21},c_{22}\in\cn$ and the function $f_{2x}:\Sol\to\cn$ given by
$$f_{2x}(x,y,t)=c_{22}x^2+c_{21}xe^{-t}+c_{20}e^{-2t}$$ it directly follows that
the condition $\tau(f_{2x})=0$ is equivalent to the following system of linear equations.
$$\begin{bmatrix}
2 & 0 & 1\\
0 & 1 & 0
\end{bmatrix}
\cdot
\begin{bmatrix}
c_{20}\\
c_{21}\\
c_{22}
\end{bmatrix}=0.$$
This shows that the function $f_{2x}:\Sol\to\cn$ is non-constant and harmonic if and only
if is of the form $$f_{2x}(x,y,t)=a_2(2x^2-e^{-2t}),$$ where $a_2\in\cn$ is non-zero.
Employing the symmetry of the tension field we easily see that the function
$f_2:\Sol\to\cn$ with
$$f_{2}(x,y,t)=a_2(\alpha +\beta y)(2x^2-e^{-2t})+b_2(\gamma +\delta x)(2y^2-e^{2t})$$
is non-constant and harmonic for non-zero elements $(a_2,b_2),(\alpha,\beta),(\gamma,\delta)\in\cn^2$.

For complex numbers $c_{30},c_{31},c_{32},c_{33}\in\cn$ and the function $f_{3x}:\Sol\to\cn$ given by
$$f_{3x}(x,y,t)=c_{33}x^3+c_{32}x^2e^{-t}+c_{31}xe^{-2t}+c_{30}e^{-3t}$$ the condition
$\tau(f_{3x})=0$ is equivalent to the following system of linear equations.
$$\begin{bmatrix}
9 & 0 & 2 & 0\\
0 & 2 & 0 & 3\\
0 & 0 & 1 & 0
\end{bmatrix}
\cdot
\begin{bmatrix}
c_{30}\\
c_{31}\\
c_{32}\\
c_{33}
\end{bmatrix}=0.$$
This shows that the function $f_{3x}:\Sol\to\cn$ is non-constant and harmonic if and only
if is of the form $$f_{3x}(x,y,t)=a_3(2x^3-3xe^{-2t}),$$ where $a_3\in\cn$ is non-zero.
Again utilising the symmetry of the tension field, we see that $f_3:\Sol\to\cn$
with
$$f_{3}(x,y,t)=a_3(\alpha +\beta y)(2x^3-3xe^{-2t})+b_3(\gamma +\delta x)(2y^3-3ye^{2t})$$
is non-constant and harmonic for non-zero elements $(a_2,b_2),(\alpha,\beta),(\gamma,\delta)\in\cn^2$.
This process can now be repeated for any natural number $n>3$. For example we get
$$f_{4x}(x,y,t)=a_4(8x^4-24x^2e^{-2t}+3e^{-4t}),$$
$$f_{5x}(x,y,t)=a_5(8x^5-40x^3e^{-2t}+15xe^{-4t}),$$
$$f_{6x}(x,y,t)=a_6(16x^6-120x^4e^{-2t}+90x^2e^{-4t}-5e^{-6t}),$$
$$f_{7x}(x,y,t)=a_7(16x^7-168x^5e^{-2t}+210x^3e^{-4t}-35xe^{-6t}).$$
\vskip .2cm

We conclude this example by letting $h=h_2\cdot h_3:\Sol\to\cn$ be the product of the functions
$h_2,h_3:\Sol\to\cn$ with
$$h_{2}(x,y,t)=a_2(2x^2-e^{-2t})+a_3(2x^3-3xe^{-2t})$$
and
$$h_{3}(x,y,t)=b_2(2y^2-e^{2t})+b_3(2y^3-3ye^{2t}).$$
Then it is easily shown that $$\tau(h_2\cdot h_3)=-8(a_2+3a_3x)(b_2+3b_3y)
\ \ \text{and}\ \ \tau^2(h_2\cdot h_3)=0.$$
This means that here we have a complex 4-dimensional family of proper
biharmonic functions globally defined on the model space $\Sol$.
\end{example}

\section{The model geometry $\Nil$}
The space $\Nil$ on Thurston's list can be presented as the $3$-dimensional nilpotent Lie subgroup
$$
\Nil=\{
\begin{bmatrix}
1 & x & t\\
0 & 1 & y\\
0 & 0 & 1
\end{bmatrix}|\ x,y,z\in\rn\}.
$$
of $\SLR 3$ equipped with its standard left-invariant Riemannian metric.
The restriction of this metric to $\Nil$ is determined by the orthonormal
basis $\{X,Y,T\}$ of its Lie algebra $\nil$ given by
$$
X=
\begin{bmatrix}
0 & 1 & 0\\
0 & 0 & 0\\
0 & 0 & 0
\end{bmatrix},\ \
Y=
\begin{bmatrix}
0 & 0 & 0\\
0 & 0 & 1\\
0 & 0 & 0
\end{bmatrix},\ \
T=
\begin{bmatrix}
0 & 0 & 1\\
0 & 0 & 0\\
0 & 0 & 0
\end{bmatrix}.
$$

It is well-known that in the global coordinates $(x,y,t)$ on $\Nil$ the left-invariant Riemannian metric satisfies
$$
ds^2=dx^2+dy^2+(dt-xdy)^2.
$$
A straight forward calculation shows that the corresponding Laplace-Beltrami operator $\tau$ is given by
\begin{equation*}\label{tension-Nil}
\tau(f)=(\frac {\partial ^2 f}{\partial x^2}+\frac {\partial ^2 f}{\partial y^2})
+2x\frac {\partial ^2 f}{\partial y\partial t}+(1+x^2)\frac {\partial ^2 f}{\partial t^2}.
\end{equation*}

We now give a new family of globally defined complex-valued proper biharmonic functions
on $\Nil$.

\begin{example}\label{exam:Nil-1}
For two holomorphic functions $h_1,h_2:\cn\to\cn$ and a non-zero element $a\in\cn^2$ we define the
complex-valued function $f_1:Nil\to\cn$ by
$$f_1(x,y,t)=h_1(x+iy)+h_2(x-iy)+a_1t+a_2xt.$$
Then it is clear that $f_1$ is non-contstant and harmonic i.e. $f_1\neq 0$ and $\tau(f_1)=0$.

For a non-zero element $b\in\cn^{12}$ we define the function $f_2:\Nil\to\cn$ with the following formula
\begin{eqnarray*}
f_2(x,y,t)&=&b_1x^2+b_2y^2+b_3yt+b_4x^3+b_5x^2y+b_6x^2t+b_7xy^2\\
& &\quad+b_8y^3+b_9x^3y+b_{10}xy^3+b_{11}y^2t+b_{12}x^3t.
\end{eqnarray*}
Then an elementary calculation gives
\begin{eqnarray*}
\tau(f_2)&=&2b_1+2b_2+2b_3x+6b_4x+2b_5y+2b_6t+2b_7x\\
& &\quad +6b_8y+6b_9xy+6b_{10}xy+2b_{11}(t+2xy)+6b_{12}xt.
\end{eqnarray*}
and
$\tau^2(f_2)=0$.
This shows that the function $f_2:\Nil\to\cn$ provides a 12-dimensional
family of proper biharmonic functions on $\Nil$.
\end{example}

\section{The model geometry $\SL2$}

The model space $\SL2$ on Thurston's list is diffeomorphic to the universal cover of the 3-dimensional
Lie group $\SLR 2$ of $2\times 2$ real traceless matrices.  It is well-known that
$\SL2$ can, as a Riemannian manifold, be modelled as $\rn^3$ equipped with the following metric
$$
ds^2=\frac 1{y^2}(dx^2+dy^2)+(dt+\frac{dx}y)^2.
$$
For this fact we refer to \cite{BW-book}.  This metric is different from the
one obtained by lifting the standard metric of $\SLR 2$ to its
universal cover.  It is also clear that it is not a product metric induced by metrics on
$\rn^2$ and $\rn$, respectively.
The Laplace-Beltrami operator on $\SL2$ with the above metric $ds^2$ satisfies
\begin{equation*}\label{tension-SL2}
\tau(f)=y^2(\frac {\partial ^2 f}{\partial x^2}+\frac {\partial ^2 f}{\partial y^2})
+2\frac {\partial ^2 f}{\partial t^2}-2y\frac {\partial ^2 f}{\partial x\partial t}.
\end{equation*}

We now present a new family of globally defined complex-valued proper biharmonic functions
on the model space $\SL2$.

\begin{example}\label{exam:SL2-1}
Let $h_1,h_2:\cn\to\cn$ be holomorphic functions on $\cn$ and for a non-zero element $a\in\cn^2$
we define the function $f_1:\SL2\to\cn$ by
$$f_1(x,y,t)=h_1(x+iy)+h_2(x-iy)+a_1t+a_2yt.$$
Then it immediately follows from the above formula for the Laplace-Beltrami
operator that $\tau(f_1)=0$ so the non-constant function $f_1$ is harmonic.

For a non-zero $b\in\cn^6$ let $f_2:\SL2\to\cn$ be the complex-valued function satisfying
$$f_2(x,y,t)=b_1xt+b_2t^2+b_3xt^2+b_4yt^2+b_5t^3+b_6yt^3.$$
Then an elementary calculation shows that
$$\tau(f_2)(x,y,t)=-2b_1y+4b_2+4b_3(x-yt)+4b_4y+12b_5t+12b_6yt$$
and hence $\tau^2(f_2)=0$.
This gives a complex 6-dimensional family of proper biharmonic functions on $\SL2$.
\end{example}

\section{Product spaces}

The last two remaining model geometries on Thurston's list are the product spaces
$\H^2\times\rn$ and $\s^2\times\rn$.  Before dealing with these we develop some
general theory for product spaces. In this situation we can separate variables.

We assume that $(M,g)=(M_1,g_1)\times(M_2,g_2)$ is the
product of two Riemannian manifolds. Further that $f_1:M_1\to\cn$, $f_2:M_2\to\cn$
are complex-valued functions and $f:M\to\cn$ is given by the product
$$f(x,y)=f_1(x)\cdot f_2(y).$$

The following result is a wide going generalisation, of Lemma 2.4 of the paper
\cite{Ou-1} by Ye-Lin Ou, in this special case.

\begin{lemma}\label{lemma-product}
Let $(M,g)=(M_1,g_1)\times(M_2,g_2)$ be the product of two Riemannian manifolds.
Further let $f_1:M_1\to\cn$, $f_2:M_2\to\cn$ be complex-valued functions
and $f:M\to\cn$ be given by $f(x,y)=f_1(x)\cdot f_2(y)$.
Then the $n$-th tension field satisfies
$$\tau^n(f)=\sum_{k=0}^n \binom nk\tau^{n-k}(f_1)\cdot\tau^k(f_2).$$
\end{lemma}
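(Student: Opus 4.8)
The plan is to prove the formula by induction on $n$, using the key fact that for a product metric $(M,g)=(M_1,g_1)\times(M_2,g_2)$ the Laplace-Beltrami operator splits as a sum of the operators on the two factors. Concretely, if $\tau_1$ and $\tau_2$ denote the tension fields on $(M_1,g_1)$ and $(M_2,g_2)$ respectively, then for a product $f(x,y)=\phi(x)\cdot\psi(y)$ the product metric makes the metric coefficients block-diagonal, so that
$$\tau(\phi\cdot\psi)=\tau_1(\phi)\cdot\psi+\phi\cdot\tau_2(\psi).$$
The first step is to establish this splitting rigorously from the local coordinate formula for $\tau$ given in Section \ref{section-r-harmonic}: since $g^{ij}=0$ whenever $i$ and $j$ index coordinates on different factors and $\sqrt{|g|}=\sqrt{|g_1|}\sqrt{|g_2|}$ factors, the double sum decouples into the $M_1$-part acting on $\phi$ (with $\psi$ passing through as a constant in the $x$-derivatives) plus the $M_2$-part acting on $\psi$.

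Once this base identity is in hand, I would verify the case $n=1$, which is exactly the splitting formula above and matches the claimed sum $\sum_{k=0}^{1}\binom{1}{k}\tau^{1-k}(f_1)\cdot\tau^{k}(f_2)=\tau_1(f_1)\cdot f_2+f_1\cdot\tau_2(f_2)$. For the inductive step, assume the formula holds for $n$ and apply $\tau$ once more to $\tau^n(f)=\sum_{k=0}^{n}\binom{n}{k}\tau_1^{n-k}(f_1)\cdot\tau_2^{k}(f_2)$. Each summand is again a product of a function on $M_1$ with a function on $M_2$, so the splitting identity applies termwise, yielding
$$\tau^{n+1}(f)=\sum_{k=0}^{n}\binom{n}{k}\Bigl(\tau_1^{n-k+1}(f_1)\cdot\tau_2^{k}(f_2)+\tau_1^{n-k}(f_1)\cdot\tau_2^{k+1}(f_2)\Bigr).$$

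The final step is purely combinatorial: reindex the two resulting sums so that both are expressed in terms of $\tau_1^{n+1-j}(f_1)\cdot\tau_2^{j}(f_2)$, and collect coefficients. The coefficient of this term becomes $\binom{n}{j}+\binom{n}{j-1}$, which by Pascal's identity equals $\binom{n+1}{j}$, giving precisely the claimed formula at level $n+1$. I do not expect any genuine obstacle here; the only point requiring care is the clean justification of the operator splitting $\tau=\tau_1+\tau_2$ on product functions, since everything downstream is the standard binomial-theorem-style induction (indeed the statement is formally identical to the Leibniz rule, with $\tau_1$ and $\tau_2$ playing the roles of the two factors' derivatives). One should note that the linearity of $\tau$ and the earlier product rule $\tau(fh)=\tau(f)h+2\kappa(f,h)+f\tau(h)$ together with the vanishing of the cross term $\kappa(\phi,\psi)=0$ for functions depending on disjoint variable sets offer an alternative, coordinate-free derivation of the base identity, which may be the cleaner route to present.
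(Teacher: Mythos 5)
Your proposal is correct and follows essentially the same route as the paper: the paper also derives the base identity $\tau(f)=\tau(f_1)\cdot f_2+f_1\cdot\tau(f_2)$ from the product rule $\tau(fh)=\tau(f)h+2\kappa(f,h)+f\tau(h)$ together with the vanishing of $\kappa$ on functions of disjoint factors (the ``alternative, coordinate-free derivation'' you mention at the end), and then appeals to induction. You actually spell out the Leibniz-style inductive step and the Pascal identity more explicitly than the paper, which simply states ``the rest follows by induction.''
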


\begin{proof}
It follows directly from the constructions of the manifold $(M,g)$
and the function $f=f_1\cdot f_2$ that the conformality operator $\kappa$ satisfies
$$
\kappa(f_1,f_2)=\kappa(\tau(f_1),f_2)=\kappa(f_1,\tau(f_2))=0.
$$
For the tension fields $\tau(f)$ and $\tau^2(f)$ we then have
\begin{eqnarray*}
\tau(f)&=&\tau(f_1)\cdot f_2+2\,\kappa(f_1,f_2)+f_1\cdot\tau(f_2)\\
&=&\tau(f_1)\cdot f_2+f_1\cdot\tau(f_2)
\end{eqnarray*}
and
\begin{eqnarray*}
\tau^2(f)&=&\tau(\tau(f_1)\cdot f_2)+\tau(f_1\cdot\tau(f_2))\\
&=&\tau^2(f_1)\cdot f_2+2\,\kappa(\tau(f_1),f_2)+\tau(f_1)\cdot\tau(f_2)\\
& &\quad +\tau(f_1)\cdot\tau(f_2)+2\,\kappa(f_1,\tau(f_2))+f_1\cdot\tau^2(f_2)\\
&=&\tau^2(f_1)\cdot f_2+2\,\tau(f_1)\cdot\tau(f_2)+f_1\cdot\tau^2(f_2).
\end{eqnarray*}
The rest follows by induction.
\end{proof}

The general formula of Lemma \ref{lemma-product} has the following immediate consequence.

\begin{proposition}\label{proposition-product}
Let $(M,g)=(M_1,g_1)\times(M_2,g_2)$ be the product of two Riemannian manifolds.
Further let $f_1:M_1\to\cn$, $f_2:M_2\to\cn$ be complex-valued functions
and $f:M\to\cn$ be given by $f(x,y)=f_1(x)\cdot f_2(y)$.
\begin{itemize}
\item[i.] If $f_1$ is proper harmonic and $f_2$ is proper biharmonic
then their product $f$ is proper biharmonic.
\item[ii.] If $f_1$ and $f_2$ are proper biharmonic then their product $f$ is proper triharmonic.
\end{itemize}
\end{proposition}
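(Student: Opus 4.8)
The plan is to apply Lemma \ref{lemma-product} directly, translate each hypothesis into a statement about which iterated tension fields of the factors vanish, and then verify the two requirements of Definition \ref{definition-proper-r-harmonic}: that the top-order tension field of $f$ vanishes, and that the one immediately below it does not.

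First I would record the consequences of the hypotheses. If $f_1$ is proper harmonic, then $\tau(f_1)=0$ while $f_1$ does not vanish identically, so in fact $\tau^j(f_1)=0$ for every $j\geq 1$. If $f_i$ is proper biharmonic, then $\tau^2(f_i)=0$ while $\tau(f_i)$ does not vanish identically, so $\tau^k(f_i)=0$ for every $k\geq 2$. With these reductions in hand, both parts become a matter of reading off the expansion in Lemma \ref{lemma-product}.

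For part (i), I would take $n=2$ in the lemma to obtain $\tau^2(f)=\tau^2(f_1)\cdot f_2+2\,\tau(f_1)\cdot\tau(f_2)+f_1\cdot\tau^2(f_2)$, and note that each of the three summands carries a factor that vanishes, namely $\tau^2(f_1)$, $\tau(f_1)$ and $\tau^2(f_2)$ respectively; hence $\tau^2(f)=0$. The $n=1$ case then gives $\tau(f)=f_1\cdot\tau(f_2)$, which I must show is not identically zero. For part (ii) I would take $n=3$ and check term by term that every summand of $\tau^3(f)$ has a vanishing factor ($\tau^3(f_1)$, $\tau^2(f_1)$, $\tau^2(f_2)$, $\tau^3(f_2)$ for $k=0,1,2,3$), so that $\tau^3(f)=0$; the $n=2$ case then yields $\tau^2(f)=2\,\tau(f_1)\cdot\tau(f_2)$, which again I must show is not identically zero.

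The only non-mechanical point — the main obstacle in an otherwise purely formal argument — is the properness claim: ruling out that the product of the two surviving factors vanishes identically. Here I would invoke the elementary fact that a function of the form $u(x)\cdot v(y)$ on $M_1\times M_2$ can vanish identically only if $u\equiv 0$ on $M_1$ or $v\equiv 0$ on $M_2$, since choosing $x_0\in M_1$ and $y_0\in M_2$ with $u(x_0)\neq 0$ and $v(y_0)\neq 0$ produces a point $(x_0,y_0)$ at which the product is non-zero. Applying this with $(u,v)=(f_1,\tau(f_2))$ in case (i) and with $(u,v)=(\tau(f_1),\tau(f_2))$ in case (ii) establishes both properness statements and completes the proof.
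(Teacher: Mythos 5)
Your proposal is correct and follows essentially the same route as the paper: both apply Lemma \ref{lemma-product} with $n=2$ and $n=3$ and read off which terms survive. The only difference is that you explicitly justify the properness claims (via the elementary fact that $u(x)\cdot v(y)$ vanishes identically only if $u\equiv 0$ or $v\equiv 0$), a point the paper's proof leaves implicit; this is a worthwhile addition rather than a deviation.
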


\begin{proof}
The result follows directly from the following consequences of Lemma \ref{lemma-product}
$$\tau(f)=\tau(f_1)\cdot f_2+f_1\cdot\tau(f_2),$$
$$\tau^2(f)=\tau^2(f_1)\cdot f_2+2\,\tau(f_1)\cdot\tau(f_2)+f_1\cdot\tau^2(f_2).$$
$$\tau^3(f)=\tau^3(f_1)\cdot f_2+3\,\tau^2(f_1)\cdot\tau(f_2)+3\,\tau(f_1)\cdot\tau^2(f_2)+f_1\cdot\tau^3(f_2).$$
\end{proof}

\begin{remark}
It should be noted that if the functions $f_1$ and $f_2$ in Proposition
\ref{proposition-product} are both proper biharmonic then it follows from the above calculations that
$$\tau^2(f)=2\, \tau(f_1)\cdot\tau(f_2)\neq 0.$$
This means that the product is not biharmonic.  This contradicts
the stated result in Proposition 2.1 of the paper \cite{Ou-2}.
\end{remark}

The first part of Proposition \ref{proposition-product} can now easily be generalised to the following.

\begin{proposition}\label{proposition-product-general}
Let $(M,g)=(M_1,g_1)\times(M_2,g_2)$ be the product of two Riemannian manifolds.
Further let $f_1:M_1\to\cn$, $f_2:M_2\to\cn$ be complex-valued functions
and $f:M\to\cn$ be given by $f(x,y)=f_1(x)\cdot f_2(y)$.
If $f_1$ is proper harmonic and $f_2$ is proper r-harmonic
then their product $f$ is proper r-harmonic.
\end{proposition}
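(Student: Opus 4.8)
The plan is to derive everything from the binomial-type formula of Lemma \ref{lemma-product}, exploiting the hypothesis that $f_1$ is harmonic to collapse the sum to a single term. First I would record that since $f_1$ is proper harmonic we have $\tau(f_1)=0$, and hence $\tau^{n-k}(f_1)=0$ for every $n-k\geq 1$, while $\tau^0(f_1)=f_1$. Substituting into
$$\tau^n(f)=\sum_{k=0}^n\binom nk\,\tau^{n-k}(f_1)\cdot\tau^k(f_2)$$
therefore annihilates every summand except the one with $k=n$, leaving
$$\tau^n(f)=\binom nn\,f_1\cdot\tau^n(f_2)=f_1\cdot\tau^n(f_2)$$
for all $n\geq 0$.

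With this identity in hand the $r$-harmonicity is immediate: taking $n=r$ and using that $f_2$ is $r$-harmonic gives $\tau^r(f)=f_1\cdot\tau^r(f_2)=0$, so $f$ is $r$-harmonic.

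It then remains to verify the properness, namely that $\tau^{r-1}(f)$ does not vanish identically; this is the only point requiring a small argument. Taking $n=r-1$ in the collapsed formula gives $\tau^{r-1}(f)=f_1\cdot\tau^{r-1}(f_2)$. Since $f_1$ is proper harmonic it is not identically zero, so there is a point $x_0\in M_1$ with $f_1(x_0)\neq 0$; since $f_2$ is proper $r$-harmonic, the function $\tau^{r-1}(f_2)$ is not identically zero, so there is a point $y_0\in M_2$ with $\tau^{r-1}(f_2)(y_0)\neq 0$. Because the two factors depend on independent variables, at the point $(x_0,y_0)\in M$ we obtain $\tau^{r-1}(f)(x_0,y_0)=f_1(x_0)\cdot\tau^{r-1}(f_2)(y_0)\neq 0$, whence $\tau^{r-1}(f)\not\equiv 0$ and $f$ is proper $r$-harmonic.

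I do not anticipate any genuine obstacle: the proposition is a direct specialisation of Lemma \ref{lemma-product}, and the only mildly non-formal step is the non-vanishing of the product $f_1\cdot\tau^{r-1}(f_2)$, which is controlled precisely by the product structure $(M,g)=(M_1,g_1)\times(M_2,g_2)$ as above.
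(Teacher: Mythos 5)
Your proposal is correct and follows essentially the same route as the paper: the paper's one-line proof simply asserts that $\tau^k(f)=f_1\cdot\tau^k(f_2)$ for all $k$, with the last one vanishing and the one before not, which is exactly the collapse of the binomial formula from Lemma \ref{lemma-product} that you carry out. Your added detail on the non-vanishing of $f_1\cdot\tau^{r-1}(f_2)$ at a point $(x_0,y_0)$ is a welcome elaboration of what the paper leaves implicit.
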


\begin{proof}
It immediately follows from Proposition \ref{proposition-product} that
$$\tau(f)=f_1\cdot\tau(f_2)\neq 0,\dots , \tau^{r-1}(f)=f_1\cdot\tau^{r-1}(f_2)\neq 0\ \ \text{and}\ \ \tau^r(f)=0.$$
\end{proof}

\section{The product spaces $\H^2\times\rn$ and $\s^2\times\rn$}

Let us now consider the hyperbolic disc $H^2$ with its standard Riemannian metric of constant curvature $-1$.  We then equip the product space $\H^2\times\rn$ with its product metric.  In the standard global coordinates $(z,t)$ on $\H^2\times\rn$ the operators $\tau$ and $\kappa$ are then given by
\begin{equation*}\label{tension-H2-R}
\tau(f)=4(1-z\bar z)^2\frac {\partial ^2 f}{\partial z\partial \bar z}+\frac {\partial ^2 f}{\partial t^2}
\end{equation*}
and
\begin{equation*}\label{conformality-H2-R}
\kappa(f,h)=2(1-z\bar z)^2(
\frac{\partial f}{\partial z}\frac{\partial h}{\partial \bar z}
+\frac{\partial f}{\partial \bar z}\frac{\partial h}{\partial z})
+\frac{\partial f}{\partial t}\frac{\partial h}{\partial t}.
\end{equation*}

As a direct consequence of Proposition \ref{proposition-product} we have the following result.

\begin{corollary}
Let the functions $f,g:\H^2\to\cn$ be holomorphic and $p:\rn\to\rn$ be a polynomial
$$p(t)=b_0+b_1t+b_2t^2+b_3t^3$$ such that $(b_2,b_3)\neq 0$.  Then the function
$$F(z,t)=(f(z)+g(\bar z))\cdot p(t)$$
is proper biharmonic on the product space $\H^2\times\rn$.
\end{corollary}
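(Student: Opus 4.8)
The plan is to recognise $F$ as a product function on $(\H^2\times\rn, g)$ and invoke the first part of Proposition \ref{proposition-product}. I set $f_1:\H^2\to\cn$ to be $f_1(z)=f(z)+g(\bar z)$ and $f_2:\rn\to\cn$ to be $f_2=p$, so that $F=f_1\cdot f_2$. To apply the proposition I must check that $f_1$ is proper harmonic on $\H^2$ and that $p$ is proper biharmonic on $\rn$.

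First I would verify harmonicity of $f_1$ directly from the Laplace-Beltrami operator recorded above, $\tau(f)=4(1-z\bar z)^2\,\partial^2 f/\partial z\,\partial\bar z$. Since $f$ is holomorphic, $\partial f/\partial\bar z=0$, and since $g$ is a function of $\bar z$ alone, $\partial g/\partial z=0$; in either case the mixed derivative $\partial^2/\partial z\,\partial\bar z$ annihilates the summand. As $1-z\bar z$ is nowhere zero on the disc this yields $\tau(f_1)=0$, so $f_1$ is harmonic, and it is proper harmonic as long as $f+g\not\equiv 0$ --- a non-degeneracy that should be read into the hypotheses, since otherwise $F$ vanishes identically.

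For the second factor the operator $\tau$ on $\rn$ reduces to $d^2/dt^2$, whence $\tau(p)(t)=2b_2+6b_3t$ and $\tau^2(p)=p''''=0$ because $\deg p\le 3$. Thus $p$ is biharmonic, and the assumption $(b_2,b_3)\neq 0$ is exactly what forces $\tau(p)\not\equiv 0$, so $p$ is proper biharmonic. Both hypotheses of Proposition \ref{proposition-product} now hold, and its first part gives that $F=f_1\cdot p$ is proper biharmonic.

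The only delicate point --- and the nearest thing to an obstacle --- is the word \emph{proper}. By Proposition \ref{proposition-product} one has $\tau(F)=f_1\cdot\tau(p)$, a product of a function of $z$ with a function of $t$; such a product fails to vanish identically precisely when both $f+g\not\equiv 0$ and $\tau(p)\not\equiv 0$. The latter is guaranteed by $(b_2,b_3)\neq 0$, and the former is the implicit non-triviality on $f,g$ noted above. Granting this, the proof is a clean specialisation of the product machinery already developed.
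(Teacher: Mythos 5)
Your proof is correct and follows essentially the same route as the paper: identify $f_1=f(z)+g(\bar z)$ as harmonic on $\H^2$, identify $p$ as proper biharmonic on $\rn$ (since $p''=2b_2+6b_3t\not\equiv 0$ and $p''''=0$), and invoke Proposition \ref{proposition-product}(i). Your explicit remark that one must also assume $f+g\not\equiv 0$ for $F$ to be \emph{proper} biharmonic is a small point of care that the paper leaves implicit.
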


\begin{proof}
It is well-known that a function $f_1:\H^2\to\cn$ is harmonic if and only if it is the sum of a holomorphic function and an  anti-holomorphic one.  Further any proper biharmonic function $p:\rn\to\rn$ is a polynomial $p(t)=b_0+b_1t+b_2t^2+b_3t^3$ with $(b_2,b_3)\neq 0$.  The result is now an immediate consequence of Proposition \ref{proposition-product}.
\end{proof}

As a further consequence of Proposition \ref{proposition-product} we have the following result. The proper biharmonic functions on the hyperbolic disc are described in Appendix \ref{appendix}.

\begin{corollary}
Let the function $f:\H^2\to\cn$ be proper biharmonic on the hyperbolic disc and $p:\rn\to\rn$ be a non-zero harmonic polynomial $p(t)=a_0+a_1t$. Then the function
$$
F(z,\bar z, t)=f(z,\bar z)\cdot p(t)
$$ is proper biharmonic on the product space $\H^2\times\rn$.
\end{corollary}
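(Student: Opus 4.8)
The plan is to recognise that $F$ has exactly the product form $F(z,\bar z,t)=f_1(x)\cdot f_2(y)$ treated in Lemma \ref{lemma-product} and Proposition \ref{proposition-product}, only with the two factors playing the opposite roles to those in part (i) of that proposition. I would set $M_1=\H^2$, $M_2=\rn$, $f_1=f$ and $f_2=p$. Since the product manifold construction is symmetric in its two factors, and the formula of Lemma \ref{lemma-product} is invariant under interchanging them, the corollary is a direct specialisation of part (i) of Proposition \ref{proposition-product} after swapping the two factors; equivalently, one simply applies the product formula by hand.

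Next I would verify the two hypotheses in the product language and then compute. Since $p(t)=a_0+a_1t$, we have $\tau(p)=p''(t)=0$, and $p$ is non-zero by assumption, so $p$ is proper harmonic; in particular $\tau^k(p)=0$ for every $k\ge 1$. The other factor $f$ is proper biharmonic on $\H^2$ by hypothesis, meaning $\tau^2(f)=0$ while $\tau(f)$ does not vanish identically. Because the two factors depend on disjoint sets of variables, the mixed conformality terms vanish, and Lemma \ref{lemma-product} gives
$$\tau(F)=\tau(f)\cdot p+f\cdot\tau(p)=\tau(f)\cdot p$$
together with
$$\tau^2(F)=\tau^2(f)\cdot p+2\,\tau(f)\cdot\tau(p)+f\cdot\tau^2(p)=0,$$
so $F$ is biharmonic.

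Finally I would confirm that $F$ is proper, i.e. that $\tau(F)=\tau(f)\cdot p$ does not vanish identically. This follows because $\tau(f)\not\equiv 0$ (as $f$ is proper biharmonic) and $p\not\equiv 0$, so their product, being a product of non-zero functions of independent variables, cannot vanish identically. Hence $F$ is proper biharmonic, as claimed. I do not anticipate any genuine obstacle here, since the statement is an immediate consequence of the product formula; the only point that warrants a word of care is the ordering of the two factors, which is resolved at once by the symmetry of the construction and of the formula in Lemma \ref{lemma-product}.
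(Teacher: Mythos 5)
Your proposal is correct and follows exactly the route the paper intends: the paper states this corollary without a written proof, presenting it as a further consequence of Proposition \ref{proposition-product}, and your explicit verification (swapping the roles of the two factors, computing $\tau(F)=\tau(f)\cdot p$ and $\tau^2(F)=0$, and checking properness) is precisely the argument being invoked.
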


The story for the product space $\s^2\times\rn$ is much the same as that of $\H^2\times\rn$.  Because of the maximum principle for harmonic functions we need to consider the punctured sphere $P=\s^2\setminus\{p\}$ instead of $\s^2$.  This we model as the complex plane equipped with its historic Riemannian metric
$$ds^2=\frac 4{(1+(x^2+y^2))^2}(dx^2+dy^2)$$
of constant curvature +1. In this case Proposition \ref{proposition-product} leads to the following constructions.

\begin{corollary}
Let the functions $f,g:P\to\cn$ be holomorphic on the punctured sphere.
Further let $p:\rn\to\rn$ be a polynomial
$$
p(t)=b_0+b_1t+b_2t^2+b_3t^3,
$$
such that $(b_2,b_3)\neq 0$.  Then the function
$$
F(z,\bar z,t)=(f(z)+g(\bar z))\cdot p(t)
$$ is proper biharmonic on the product space $P\times\rn$.
\end{corollary}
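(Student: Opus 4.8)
```latex
The plan is to deduce this corollary directly from Proposition
\ref{proposition-product}, exactly as in the earlier hyperbolic case. The
strategy has two independent ingredients: identifying the harmonic functions on
the factor $P$, and identifying the proper biharmonic functions on the factor
$\rn$. Once both factors are understood, the product structure does all the
remaining work.

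First I would treat the $\rn$-factor. With the standard metric on $\rn$ the
Laplace--Beltrami operator is simply $\tau(p)=p''$, so $\tau^2(p)=p^{(4)}$. A
function $p$ is therefore proper biharmonic precisely when $p^{(4)}=0$ but
$p''\neq 0$, which forces $p$ to be a polynomial of degree two or three, i.e.
$p(t)=b_0+b_1t+b_2t^2+b_3t^3$ with $(b_2,b_3)\neq 0$. This is the same
computation already invoked in the first corollary of the section.

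Next I would treat the $P$-factor. Using the metric of constant curvature $+1$
given above, the Laplace--Beltrami operator on $P$ has the form of a positive
multiple of $\partial^2/\partial z\,\partial\bar z$ (analogous to the displayed
operator $\tau$ on $\H^2\times\rn$, with the appropriate conformal factor). A
holomorphic function $f$ satisfies $\partial f/\partial\bar z=0$ and an
anti-holomorphic $g(\bar z)$ satisfies $\partial g/\partial z=0$, so in either
case the mixed second derivative vanishes and hence $\tau=0$. Thus $f(z)+g(\bar
z)$ is harmonic on $P$; it is proper harmonic (nonvanishing) whenever $f+g$ is
non-constant, which is the generic situation we assume.

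Finally I would combine the two ingredients. Writing $f_1(z,\bar
z)=f(z)+g(\bar z)$ on $M_1=P$ and $f_2(t)=p(t)$ on $M_2=\rn$, part~(i) of
Proposition \ref{proposition-product} applies: $f_1$ is proper harmonic and
$f_2$ is proper biharmonic, so the product $F=f_1\cdot f_2$ is proper
biharmonic on $P\times\rn$. The only genuine subtlety, and the step I would
watch most carefully, is the ``proper'' clause: one must verify that
$\tau(F)=f_1\cdot\tau(f_2)$ does not vanish identically. Since $f_1$ is a
nonzero harmonic function and $\tau(f_2)=p''\neq 0$, the product $f_1\cdot p''$
is not identically zero, so $F$ is genuinely biharmonic rather than harmonic.
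This completes the argument.
```
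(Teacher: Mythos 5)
Your proposal is correct and follows essentially the same route as the paper: the paper likewise derives this corollary immediately from Proposition \ref{proposition-product}, using that holomorphic plus anti-holomorphic functions are harmonic for the conformally flat metric on $P$ and that proper biharmonic functions on $\rn$ are exactly the polynomials of degree $2$ or $3$. The only tiny quibble is that for the ``proper'' clause you need $f+g$ not identically zero rather than non-constant, but your final verification $\tau(F)=f_1\cdot p''\not\equiv 0$ is exactly the right check.
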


The proper biharmonic functions on the punctured sphere $P$ are described in Appendix \ref{appendix}.

\begin{corollary}
Let the function $f:P\to\cn$ be proper biharmonic on the punctured sphere and
$p:\rn\to\rn$ be a non-zero harmonic polynomial $p(t)=a_0+a_1t$. Then the function
$$
F(z,\bar z,t)=f(z,\bar z)\cdot p(t)
$$
is proper biharmonic on the product space $P\times\rn$.
\end{corollary}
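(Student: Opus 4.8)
The plan is to deduce this as an immediate consequence of the first part of Proposition~\ref{proposition-product}, in exact parallel with the preceding corollary for $\H^2\times\rn$. I would set $M_1=P$ and $M_2=\rn$ and regard $F$ as the product $f\cdot p$ of the proper biharmonic function $f$ on the punctured sphere with the polynomial $p$ on $\rn$. Since the product $F=f\cdot p=p\cdot f$ is symmetric in its two factors, I can invoke Proposition~\ref{proposition-product}(i) with the roles interchanged, taking $p$ as the proper harmonic factor and $f$ as the proper biharmonic factor.

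First I would verify that $p$ is genuinely proper harmonic on $\rn$. On $\rn$ the Laplace--Beltrami operator is just the second derivative, so $\tau(p)=p''=0$ for $p(t)=a_0+a_1t$, whence $p$ is harmonic; and properness at level $r=1$ only requires $\tau^0(p)=p\not\equiv 0$, which is exactly the assumption that $p$ is non-zero. The function $f$ is proper biharmonic by hypothesis. With both factors meeting the hypotheses of Proposition~\ref{proposition-product}(i), the conclusion that $F$ is biharmonic follows at once: from the recorded identity $\tau^2(F)=\tau^2(f)\cdot p+2\,\tau(f)\cdot\tau(p)+f\cdot\tau^2(p)$ the first term vanishes because $f$ is biharmonic and the other two because $\tau(p)=\tau^2(p)=0$, so $\tau^2(F)=0$.

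The only step demanding the slightest care, and hence the closest thing to an obstacle, is the properness claim $\tau(F)\not\equiv 0$. Here $\tau(F)=\tau(f)\cdot p+f\cdot\tau(p)=\tau(f)\cdot p$, a product of a function of $(z,\bar z)$ that is not identically zero (since $f$ is proper biharmonic) with a function of $t$ that is not identically zero. A product of two not-identically-vanishing functions of disjoint variables cannot vanish identically on $P\times\rn$, so $\tau(F)\not\equiv 0$ and $F$ is proper biharmonic, with no further computation required.
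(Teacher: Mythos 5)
Your proposal is correct and matches the paper's intent exactly: the corollary is stated as a direct consequence of Proposition~\ref{proposition-product}(i), applied (by symmetry of the product) with $p$ as the proper harmonic factor and $f$ as the proper biharmonic one, and your verification of properness via $\tau(F)=\tau(f)\cdot p$ is the right justification. The paper gives no separate proof beyond citing that proposition, so your argument simply fills in the same routine details.
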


\section{Acknowledgements}
The author is grateful to Ye-Lin Ou, Stefano Montaldo and Andrea Ratto for useful discussions related to this work.

\appendix

\section{Biharmonic functions on $\H^2$ and $\s^2$}\label{appendix}

For the completeness of this paper we here state a few well-know facts.  Let
$$
\H^2=\{z=(x+iy)\in\cn|\ z\bar z=(x^2+y^2)<1\}
$$
be the standard hyperbolic disc of constant curvature $-1$.  Its Riemannian metric
$$
ds^2=\frac 4{(1-(x^2+y^2))^2}(dx^2+dy^2)
$$
is conformally equivalent to the flat Euclidean metric of the open unit disc.
The corresponding Laplace-Beltrami operator is given by
$$
\tau(f)(z,\bar z)=4{(1-(z\bar z))^2}\frac{\partial^2f}{\partial z\partial\bar z}(z,\bar z).
$$
A harmonic function $h:\H^2\to\cn$ is the sum of a holomorphic and an anti-holomorphic one.
This means that a biharmonic function $F:\H^2\to\cn$ must satisfy
$$
4{(1-(z\bar z))^2}\frac{\partial^2F}{\partial z\partial\bar z}(z,\bar z)=h(z,\bar z)=h_1(z)+h_2(\bar z),
$$
where $h_1,h_2:\H^2\to\cn$ are holomorphic.  By dividing and integrating locally we get
$$
F(z,\bar z)=\frac 14\iint \frac{h(z,\bar z)}{(1-(z\bar z))^2}dzd\bar z.
$$

\begin{example}
Let $h:\H^2\to\cn$ be the constant harmonic function $4$.  Then a double integration gives
$$
F_1(z,\bar z)=\iint \frac 1{(1-(z\bar z))^2}d\bar zdz=-\log(1-z\bar z)+H_1(z,\bar z)
$$
or
$$
F_2(z,\bar z)=\iint \frac 1{(1-(z\bar z))^2}dzd\bar z=-\log(1-z\bar z)+H_2(z,\bar z),
$$
depending on the order of integration.  Here $H_1$ and $H_2$ are local harmonic functions.
This means that, modulo a harmonic function, we obtain the well-known globally defined
proper biharmonic function
$$
F:\H^2\to\cn\ \ \ \text{with}\ \ F(z,\bar z)=-\log(1-z\bar z).
$$
\end{example}

\begin{remark}
Let $\s^2$ be the standard round sphere of constant curvature $+1$.  Then the maximum principle tells us that every globally defined harmonic function on $\s^2$ is constant.  For this reason we will instead consider the punctured sphere $P=\s^2\setminus\{p\}$.  We model this as the complex plane $\cn$ with the conformally flat Riemannian metric
$$
ds^2=\frac 4{(1+(x^2+y^2))^2}(dx^2+dy^2).
$$
In this case we can proceed in exactly the same manner as we did for the hyperbolic disc.  For example, we yield the well-known proper biharmonic function
$$
F:P\to\cn\ \ \text{with}\ \ F(z,\bar z)=\log(1+z\bar z).
$$
\end{remark}

\end{document}